\documentclass[12pt,oneside,reqno]{amsart}
\usepackage{amssymb,amsmath}
\usepackage[english]{babel}
\usepackage{amsfonts}
\usepackage{color}
\usepackage{amsthm}
\usepackage[colorlinks=true,linkcolor=NavyBlue, citecolor=NavyBlue,urlcolor=NavyBlue]{hyperref}
\usepackage{graphicx}
\usepackage{mathtools}
\usepackage[usenames,dvipsnames]{pstricks}
\usepackage{bm}
\usepackage{amsmath}
\usepackage{listings}
\usepackage{hyperref}
\usepackage[shortlabels]{enumitem}

\usepackage{xy}
\usepackage[draft]{fixme}
\addtolength{\headheight}{1.14998pt}
\addtolength{\oddsidemargin}{-1.5cm}
\addtolength{\evensidemargin}{-1.5cm}
\addtolength{\textwidth}{2cm}

\let\phi\varphi

\newcommand{\normal}{\mathrel{\triangleleft}}

\theoremstyle{plain}
\newtheorem{theorem}{Theorem}[section]

\newtheorem{lemma}[theorem]{Lemma}

\newtheorem{corollary}[theorem]{Corollary}

\theoremstyle{remark}

\theoremstyle{definition}
\newtheorem{definition}[theorem]{Definition}

\newtheorem*{notation*}{Notation}

\usepackage{listings}

\usepackage[T1]{fontenc}
\usepackage[variablett]{lmodern}
\usepackage{xcolor}
\lstset{
 basicstyle=\ttfamily,
 keywordstyle=\color{NavyBlue},
 stringstyle=\color{NavyBlue},
 commentstyle=\color{OliveGreen},
 columns=fullflexible,
 frame = single,
}

\newenvironment{nouppercase}{%
  \renewcommand{\uppercasenonmath}[1]{}}{}
  \usepackage{collectbox}
\makeatletter

\begin{document}
\title[]{\Large{The derangements subgroup in a finite permutation group and the Frobenius--Wielandt Theorem}}
\author[R.~A.~Bailey]{R.~A.~Bailey}
\author[P.~J.~Cameron]{P.~J.~Cameron}
\author[N.~Gavioli]{N.~Gavioli}
\author[C.~M.~Scoppola]{C.~M.~Scoppola}

\address{School of Mathematics and Statistics\\
University of St Andrews\\
Mathematical Institute\\
North Haugh\\
St Andrews KY16 9SS\\
United Kingdom}
\address{DISIM \\
 Universit\`a degli Studi dell'Aquila\\
 via Vetoio\\
 I-67100 Coppito (AQ)\\
 Italy}       
\email[R.A.~Bailey]{rab24@st-andrews.ac.uk}
\email[P.J.~Cameron]{pjc20@st-andrews.ac.uk} 
\email[C.M.~Scoppola]{scoppola@univaq.it}
\email[N.~Gavioli]{norberto.gavioli@univaq.it}

\date{} \thanks{The third and fourth authors are members of INdAM - GNSAGA}.

\begin{abstract}
It is known that if the derangements subgroup of a transitive non-regular permutation group is a proper subgroup, then it is a Frobenius--Wielandt kernel, and, conversely, minimal Frobenius--Wielandt kernels are proper derangements subgroups. We present here a short survey of the literature on this topic,  and we show that, although there are no restrictions on the structure of the $p$-groups appearing as Frobenius--Wielandt complements, a $p$-group appears as a one-point stabiliser in a transitive non-regular permutation group with a proper derangements subgroup if and only if it satisfies a certain group-theoretic condition.
\end{abstract}
\begin{nouppercase}
\maketitle
\end{nouppercase}

\section{Introduction}
 
In 1895, Maillet~\cite{maillet} suggested the study of finite transitive
permutation groups in which any two-point stabiliser is trivial. The major
result on these groups was proved in 1901, with Frobenius' famous
theorem~\cite{frobenius}, asserting that such a group has a regular normal
subgroup, using his recently developed theory of group characters.

Frobenius' Theorem can be stated in two equivalent but different-looking ways, as follows (see~\cite[Satz V.7.6, V.8.2]{huppert}).

\begin{theorem}\label{frob} 
\ 
\begin{itemize}
\item[(a)] If a finite group $G$ has a non-trivial proper subgroup $H$ such that
$H\cap H^g=1$ for all $g\notin H$, then $N = G\setminus\bigcup_{g \in G}(H^g\setminus\{1\})$ is a normal subgroup such 
that $NH=G$ and $N\cap H=\{1\}$.
\item[(b)] Let $G$ be a finite transitive permutation group of degree greater
than~$1$; suppose that a point stabiliser $H$ is non-trivial but any
two-point stabiliser is trivial. Then $G$ has a regular normal subgroup $N$.
\end{itemize}
\end{theorem}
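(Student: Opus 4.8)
The plan is to prove (a) by Frobenius' character-theoretic method and to deduce (b) from it. For the reduction, note that in the setting of (b) a point stabiliser $H=G_\alpha$ is non-trivial and proper (as $G$ is transitive of degree $>1$), and for $g\notin H$ we have $g\alpha\neq\alpha$, so the two-point stabiliser $G_\alpha\cap G_{g\alpha}=H\cap H^{g^{-1}}$ is trivial; since inversion permutes $G\setminus H$, this is exactly the hypothesis of (a). Once (a) is known, transitivity of $G$ together with $G=NH$ gives that $N$ is transitive, while $N_\alpha=N\cap H=\{1\}$, so $N$ is a regular normal subgroup. Thus it suffices to prove (a).

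I would begin with the counting. Because $H\cap H^g=1$ whenever $g\notin H$, no element outside $H$ normalises $H$, so $N_G(H)=H$; hence $H$ has exactly $[G:H]$ conjugates, any two distinct ones meeting only in $\{1\}$. The union of the conjugates therefore contains $[G:H](|H|-1)$ non-identity elements, and $|N|=|G|-[G:H](|H|-1)=[G:H]$. Moreover $\bigcup_{g}(H^g\setminus\{1\})$ is invariant under conjugation, hence so is $N$; so once $N$ is shown to be a subgroup it is automatically normal, and then $N\cap H=\{1\}$ (each non-identity element of $H$ lies in $H\setminus\{1\}\subseteq\bigcup_g(H^g\setminus\{1\})$) together with $|NH|=|N|\,|H|=|G|$ finishes the proof.

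The crux is therefore that $N$ is a subgroup, and this I would obtain from induced characters. The engine is the following lemma: if $\theta$ is a class function on $H$ with $\theta(1)=0$, then the induced class function $\theta^G$ restricts to $\theta$ on $H$, vanishes on $N\setminus\{1\}$, and satisfies $\langle\theta^G,\theta^G\rangle_G=\langle\theta,\theta\rangle_H$. The first two properties come directly from the formula for an induced class function once one observes, using $N_G(H)=H$ and the trivial intersections of conjugates, that a non-identity element of $H$ can be conjugated into $H$ only by elements of $H$; the isometry property then follows from Frobenius reciprocity, the computation of the restriction of $\theta^G$ to $H$, and the divisibility $|H|\mid[G:H]-1$ (which holds because $H$ acts semiregularly on the non-trivial cosets of $H$). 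Granting this, for an irreducible character $\psi$ of $H$ set $\psi^*=(\psi-\psi(1)1_H)^G+\psi(1)1_G$, a $\Z$-combination of irreducible characters of $G$. Then the restriction of $\psi^*$ to $H$ is $\psi$, the function $\psi^*$ is constant equal to $\psi(1)$ on $N$, $\psi^*(1)=\psi(1)>0$, and a one-line inner-product calculation gives $\langle\psi^*,\psi^*\rangle_G=1$; hence $\psi^*$ is a genuine irreducible character of $G$.

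To conclude, I would show that $N=\bigcap_{\psi}\ker\psi^*$, the intersection over all irreducible characters $\psi$ of $H$. Each $\ker\psi^*$ contains $N$ since $\psi^*$ takes its value at $1$ throughout $N$; conversely, if $g\notin N$ then $g$ is conjugate to some $h\in H\setminus\{1\}$, and since the regular character of $H$ is faithful there is a $\psi$ with $\psi(h)\neq\psi(1)$, so $g\notin\ker\psi^*$. Hence $N$ is an intersection of kernels of characters of $G$, so a normal subgroup, and the order count above gives $NH=G$ and $N\cap H=\{1\}$. The main obstacle is the isometry claim in the lemma together with the verification that $\psi^*$ --- a priori only a difference of honest characters --- has norm $1$ and positive degree, so is actually irreducible: this is precisely where the combinatorial input ($N_G(H)=H$, disjointness of conjugates, $|H|\mid[G:H]-1$) is used, and no elementary substitute is known.
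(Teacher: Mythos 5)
The paper does not prove Theorem~\ref{frob} itself but cites Huppert [Satz V.7.6, V.8.2], and your argument is exactly that classical character-theoretic proof (exceptional characters $\psi^*=(\psi-\psi(1)1_H)^G+\psi(1)1_G$, norm~$1$ and positive degree, $N$ as an intersection of kernels), together with the standard reduction of (b) to (a); it is correct. The only quibble is that the divisibility $|H|\mid[G:H]-1$ is not actually needed for the isometry step, which follows from Frobenius reciprocity and the computation of $\theta^G|_H$ alone.
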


In such a group, $H$ is called a \emph{Frobenius complement} and $N$ the
\emph{Frobenius kernel}.

Note that the transitive permutation action of $G$ on the cosets of $H$, under the hypothesis $H\cap H^g=1$ for all $g\notin H$, is necessarily faithful.

Later, Thompson~\cite{thompson} showed that the Frobenius kernel is nilpotent,
and Zassenhaus~\cite{zassenhaus} determined the possible structures of
Frobenius complements: in particular, each Sylow $p$-subgroup has a single subgroup of order $p$. 

The Frobenius groups can be characterized as follows 
(see~\cite[Satz V.8.5]{huppert}).

\begin{theorem}\label{fpf}
The finite group $G$ is a Frobenius group with Frobenius complement $H$ and Frobenius kernel $N$ if and only if $G$ is a semidirect product $N \rtimes H$ in which the (proper) subgroup $H$ acts fixed-point-freely by conjugation on the  normal subgroup $N$, meaning that for $1 \neq h \in H$ and $1\neq n \in N$,  $n \neq h^{-1}nh$ holds. 
\end{theorem}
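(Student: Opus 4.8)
The plan is to prove the two implications separately, each time using Theorem~\ref{frob}(a) as the bridge between the combinatorial description of a Frobenius group (a non-trivial proper $H$ with $H\cap H^g=1$ for all $g\notin H$, together with the kernel $N=G\setminus\bigcup_{g\in G}(H^g\setminus\{1\})$) and the semidirect-product description.

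For the direction ``$\Rightarrow$'', suppose $G$ is a Frobenius group with complement $H$ and kernel $N$. Theorem~\ref{frob}(a) already gives $N\normal G$, $NH=G$ and $N\cap H=1$, so $G=N\rtimes H$ internally, and only the fixed-point-freeness of the conjugation action of $H$ on $N$ remains. I would argue by contradiction: if some $1\ne h\in H$ and $1\ne n\in N$ satisfy $h^{-1}nh=n$, then $h$ commutes with $n$, so $h=n^{-1}hn\in H\cap H^n$; since $h\ne 1$, the defining property of a Frobenius complement forces $n\in H$, and then $n\in N\cap H=1$, a contradiction.

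For the direction ``$\Leftarrow$'', suppose $G=N\rtimes H$ with $N\normal G$, $N\cap H=1$, $NH=G$, and $H$ a non-trivial proper subgroup acting fixed-point-freely on $N$ by conjugation. The first goal is to establish $H\cap H^g=1$ for every $g\notin H$. Writing $g=nk$ with $n\in N$ and $k\in H$ (the unique such factorisation, since $G=N\rtimes H$), one has $n\ne 1$ because $g\notin H$, and conjugating by $k$ reduces the claim to $H\cap H^n=1$. Now if $1\ne x\in H\cap H^n$, write $x=n^{-1}yn$ with $y\in H$; then $yx^{-1}=n\,(xn^{-1}x^{-1})$ lies in $N$ because $N\normal G$, while also $yx^{-1}\in H$, so $yx^{-1}\in N\cap H=1$, i.e.\ $y=x$. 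Hence $x=n^{-1}xn$, so $x$ commutes with $n$; as $x\ne 1$ and $n\ne 1$ this contradicts fixed-point-freeness. With $H\cap H^g=1$ in hand, Theorem~\ref{frob}(a) supplies a normal subgroup $M=G\setminus\bigcup_{g\in G}(H^g\setminus\{1\})$ with $MH=G$ and $M\cap H=1$, so $\Size{M}=\Size{G}/\Size{H}=\Size{N}$; and $N\subseteq M$ because a non-identity element of $N$ conjugate into $H$ would lie in $N\cap H=1$. Comparing orders gives $N=M$, so $N$ is the Frobenius kernel and $H$ the Frobenius complement.

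The semidirect-product bookkeeping (the reduction to $g\in N$, the order count, $N\subseteq M$) is routine; the one step carrying the real weight is the identity $yx^{-1}\in N\cap H$ in the converse, which is exactly what collapses the intersection $H\cap H^n$ down to the centraliser condition that fixed-point-freeness forbids. I do not anticipate a serious obstacle, beyond taking care that ``Frobenius group'' here includes the non-triviality of $H$ (so that $N$ is a proper subgroup and $H$ a genuine complement), in line with the hypotheses of Theorem~\ref{frob}.
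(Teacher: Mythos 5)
Your proof is correct. Note, however, that the paper itself gives no proof of this statement: Theorem~\ref{fpf} is quoted directly from Huppert \cite[Satz V.8.5]{huppert}, so there is no in-paper argument to compare yours against. What you have written is the standard elementary derivation: the forward direction extracts the semidirect-product structure from Theorem~\ref{frob}(a) and gets fixed-point-freeness from malnormality via $h=n^{-1}hn\in H\cap H^n$; the converse reduces $H\cap H^g=1$ to the case $g=n\in N$ and uses the key identity $yx^{-1}=n(xn^{-1}x^{-1})\in N\cap H$ to turn a non-trivial intersection into a forbidden fixed point, then identifies $N$ with the Frobenius kernel by the order count $\Size{M}=\Size{G}/\Size{H}=\Size{N}$ together with $N\subseteq M$. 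All of these steps check out, including the reduction $H\cap H^{nk}=(H\cap H^n)^k$ and the use of normality of $N$ to see that no non-identity element of $N$ is conjugate into $H$. Your closing caveat is the right one: the hypothesis must include that $H$ is non-trivial (not just proper), since otherwise the fixed-point-free condition is vacuous and Theorem~\ref{frob}(a) does not apply; the paper's statement leaves this implicit.
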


Equivalently, the Frobenius complements are characterised (see \cite{loupass}) as the finite groups acting fixed-point-freely on an absolutely irreducible module in characteristic~$0$ or coprime to $|G|$.\label{fpf_action}

We make a couple of remarks.
\begin{itemize}
\item
The theorem does not hold for infinite groups. (A Tarski monster,
see~\cite{olshanskii}, acting by
conjugation on its subgroups of prime order, is a counterexample.) It has
been suggested that the term ``Maillet group'' be used for a group which
satisfies the hypotheses of Frobenius' theorem (Theorem~\ref{frob}(b))
(apart possibly from finiteness), and a ``Frobenius group''
for one satisfying the conclusion.
\item
There is until now no proof of the theorem without using character theory,
though various special cases can be proved (for example, when $|H|$ is
even, or when $G$ is doubly transitive, see~\cite{flavell}); see also Terry Tao's interesting argument~\cite{tao}.
\end{itemize}

The two forms of the theorem led to different developments for abstract
groups and for permutation groups. We report on these developments and how they interplay in the next section.
In the final section,
we explain the construction of examples, and conclude with a characterisation of the $p$-groups that can appear as one-point stabilisers in a transitive non-regular permutation group that has a proper derangements subgroup.

\section{A short survey of the existing literature}

Notation varies a lot in the abundant existing literature. We fix here our own notation, essentially the same as in \cite{kaplan}.

 \begin{definition}\label{5subsets}
 Let $G$ be a finite group, and $1<H<G$. Set $n = |G:H|$.
 \begin{itemize}\label{notation}
 \item  $\Delta = \Delta_G(H) := G \setminus \bigcup_{g \in G} H^g$. 
 \item $D = D_G(H) := \langle\Delta\rangle$. Note that $D$ is a normal subgroup of $G$.
 \item $U = U_G(H):= \langle H \cap H^g \mid g \in G \setminus H\rangle$. Note that $U$ is a normal subgroup of $H$.
 \item $W = W_G(H):= U^G$, the normal closure of $U$ in $G$. 
 \item $K = K_G(H) = G \setminus \bigcup_{g \in G} (H \setminus U)^g$.
 \end{itemize}
 \end{definition}
 
 \medskip
 
 We record here some elementary well-known facts about the subsets of $G$ in 
Definition~\ref{5subsets}. The reader is referred to the cited papers for the complete proofs. Here we will only give references and comments. 
 
 \begin{lemma}\label{elem}
Let the finite group $G$ have a non-trivial proper subgroup $H$, and consider the transitive action of $G$ by right multiplication on the set of right cosets of $H$.
With the above notation, we have the following.
\begin{enumerate}[(a)]
\item \label{item:one} $D$ is a transitive subgroup of $G$;
\item \label{item:two} $W \leq D$;
\item \label{item:three} $\Delta \subset  K \subseteq D$;
\item \label{item:four} $U \leq W\cap H \leq D\cap H$.
\end{enumerate}
\end{lemma}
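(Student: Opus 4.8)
The plan is to deduce all four statements from a single observation about normal subgroups of $G$ that contain the set $\Delta$ of fixed-point-free elements. As preliminaries, write $\Omega$ for the set of right cosets of $H$; an element $x\in G$ fixes the point $Hy$ exactly when $x\in H^y$, so $\Delta$ is precisely the set of derangements of the coset action and is a union of conjugacy classes, whence $D=\langle\Delta\rangle\trianglelefteq G$. Moreover, by the Cauchy--Frobenius formula $\sum_{g\in G}|\mathrm{fix}(g)|=|G|$, and since the identity alone contributes $|\Omega|=n\geq 2$, some non-identity element has no fixed point, so $\Delta\neq\emptyset$ (this is Jordan's theorem; I will use it again below). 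That $W=U^G\trianglelefteq G$ and $U\trianglelefteq H$ is immediate from Definition~\ref{5subsets}.

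The heart of the matter is the claim: \emph{if $N\trianglelefteq G$ and $\Delta\subseteq N$, then $N$ is transitive on $\Omega$ and $N$ contains every two-point stabiliser $G_\alpha\cap G_\beta$ with $\alpha\neq\beta$.} For transitivity, the $N$-orbits form a block system $\mathcal{B}$ for $G$; if $|\mathcal{B}|\geq 2$, then applying the Cauchy--Frobenius formula to the transitive action of $G$ on $\mathcal{B}$ (one orbit, the identity contributing $\geq 2$) yields some $g\in G$ fixing no block, hence $g\notin N$ (every element of $N$ fixes every block), hence $g\notin\Delta$, so $g$ fixes a point of $\Omega$ and therefore its block --- a contradiction. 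Thus $|\mathcal{B}|=1$ and $N$ is transitive. For the second half, fix $x\in G_\alpha\cap G_\beta$ with $\alpha\neq\beta$ and count fixed points across the coset $xN$. For each $\omega\in\Omega$ the set $\{\,n\in N: xn \text{ fixes } \omega\,\}$ is, since $N$ is transitive, a non-empty right coset of the point stabiliser $N_{\omega\cdot x}$ and so has $|N|/n$ elements; summing over $\omega\in\Omega$ gives $\sum_{y\in xN}|\mathrm{fix}(y)| = n\cdot(|N|/n) = |N| = |xN|$. But $x\in xN$ already fixes the two distinct points $\alpha$ and $\beta$, so the other $|xN|-1$ elements of $xN$ have at most $|xN|-2$ fixed points in total; hence some $y\in xN$ is a derangement. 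Then $y\in\Delta\subseteq N$ and $y\in xN$, so $x\in N$.

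Granting the claim, the items follow quickly. For (a), apply the transitivity half with $N=D$. For (b), each $H\cap H^g$ with $g\notin H$ is the two-point stabiliser $G_H\cap G_{Hg}$ of the distinct points $H\neq Hg$, so $H\cap H^g\leq D$ by the claim; hence $U\leq D$, and since $D\trianglelefteq G$ we get $W=U^G\leq D$. For (c), $\Delta\subseteq K$ holds because $H\setminus U\subseteq H$ forces $\bigcup_g(H\setminus U)^g\subseteq\bigcup_g H^g$, and the inclusion is proper since $1\in K\setminus\Delta$; conversely, if $x\in K$ is not a derangement then $x\in H^g$ for some $g$, and then the defining property of $K$ gives $x\in U^g\leq U^G=W\leq D$ by (b), so $K\subseteq D$. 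Finally (d) is formal: $U\leq H$ and $U\leq U^G=W$ give $U\leq W\cap H$, while $W\leq D$ from (b) gives $W\cap H\leq D\cap H$.

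The step I expect to be the main obstacle is the second half of the claim. Its point is a rigidity in the fixed-point count: the mean number of fixed points of an element of the coset $xN$ is exactly $1$, so as soon as one element --- namely a two-point stabiliser element $x$ --- accounts for two or more fixed points, some other element of the coset must be fixed-point-free and hence pull $x$ into $N$. Everything else is a matter of unwinding the definitions in Definition~\ref{5subsets}.
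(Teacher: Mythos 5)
Your proposal is correct, and every step checks out: the identification of $\Delta$ with the derangements of the coset action, the normality of $D$, the Cauchy--Frobenius argument for transitivity of any normal subgroup containing $\Delta$, the coset fixed-point count $\sum_{y\in xN}\lvert\mathrm{fix}(y)\rvert=\lvert xN\rvert$ forcing a derangement into $xN$ whenever $x$ fixes two points, and the purely formal deductions of (c) and (d). The comparison with the paper is somewhat one-sided, because the paper does not prove (a) and (b) at all: it explicitly defers to Zantema, Knapp--Schmid and Kaplan for those items, and merely remarks that (c) and (d) ``follow directly from the definitions via (b)'' --- which is exactly how you derive them. Your self-contained argument for the key claim (a normal subgroup containing all derangements is transitive and absorbs every two-point stabiliser, via the rigidity of the average fixed-point count over a coset of a transitive normal subgroup) is precisely the ``short and elegant'' counting argument the paper attributes to the cited references, so in substance you have reconstructed the intended proof rather than found a new one. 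The one thing your write-up adds beyond what the lemma strictly needs is the observation $\Delta\neq\emptyset$ (Jordan's theorem); it is not required for any of (a)--(d) as stated, but it is harmless and correctly proved.
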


\begin{proof}
A very short and elegant proof of \ref{item:one} and \ref{item:two} is contained in  \cite{zantema}. Also \cite{knappschmid}, \cite{kaplan} provide different proofs of the same claims. Note that \ref{item:three} and \ref{item:four} follow directly from the definitions via \ref{item:two}. 
\end{proof}

In the permutation group case, $H$ is the stabiliser of one point in the
action of $G$ on the set of right cosets of $H$, while $\Delta$ is the set of \emph{derangements}
(permutations with no fixed points) in $G$, and $D$ is the derangements subgroup,
see~\cite{bcgr}. Moreover, $W$ is the group generated by the elements of $G$
with more than one fixed point.

On the abstract side, Wielandt~\cite{wielandt} proved in 1958 the following
elegant generalisation of Theorem~\ref{frob}. The notation $A\normal B$ means that $A$ is a \emph{proper} normal subgroup of~$B$.

\begin{theorem}\label{wie}
Let $H$ be a proper subgroup of the finite group $G$, and let $H^* \normal H$  such  that, for all $g \in G\setminus H$, $H \cap H^g \leq H^*$. Put
$K^* = G\setminus\bigcup_{g\in G} (H\setminus H^*)^g$. Then $K^*$ is a 
normal subgroup of $G$ such that $K^*\cap H = H^*$ and $G = HK^*$. 
\end{theorem}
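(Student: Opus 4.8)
The plan is to prove this by a character-theoretic argument generalising the classical proof of Frobenius's theorem (Theorem~\ref{frob}). Throughout I would work with the transitive action of $G$ on the $n=|G:H|$ right cosets of $H$: each point $\omega$ has stabiliser $G_\omega$ conjugate to $H$ and a distinguished normal subgroup $G_\omega^*\normal G_\omega$ conjugate to $H^*$, and the hypothesis says precisely that $G_\omega\cap G_{\omega'}\leq G_\omega^*\cap G_{\omega'}^*$ whenever $\omega\neq\omega'$. Note first that $K^*$ is by construction a union of $G$-conjugacy classes and is closed under inversion, so the only real content is that $K^*$ is a \emph{subgroup}; the remaining assertions then come essentially for free. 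Indeed, a Frobenius-style count — summing $|G_\omega\setminus G_\omega^*|=|H|-|H^*|$ over the $n$ points, and observing that each $g\in G\setminus K^*$ lies in $G_\omega\setminus G_\omega^*$ for exactly one $\omega$ (if it lay in two distinct ones, the intersection hypothesis would already put it in some $G_\omega^*$) — gives $|G\setminus K^*|=n(|H|-|H^*|)$, hence $|K^*|=n|H^*|$. Once $K^*$ is known to be a subgroup this forces $|HK^*|=|H|\,|K^*|/|H^*|=|G|$, so $G=HK^*$.

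To show $K^*$ is a subgroup I would realise it as an intersection of kernels of genuine characters of $G$. Let $\pi=(1_H)^G$ be the permutation character of the coset action, and for each $\theta\in\operatorname{Irr}(H/H^*)$ let $\tilde\theta$ be its inflation to $H$. Applying the hypothesis to both $g$ and $g^{-1}$ yields $H\cap H^g\leq H^*\cap(H^*)^g$ for every $g\in G\setminus H$; substituting this into the Mackey decomposition of $(\tilde\theta^G)|_H$ (whose non-principal double-coset terms then restrict to $\theta(1)$ times principal characters of the subgroups $H\cap H^g$, before being induced back up to $H$) produces the clean identity $(\tilde\theta^G)|_H=\tilde\theta+\theta(1)\,(\pi|_H-1_H)$. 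I would then set
\[
\Psi_\theta\deq\tilde\theta^G-\theta(1)\,(\pi-1_G),
\]
a generalized character of $G$ satisfying $\Psi_\theta|_H=\tilde\theta$ and $\Psi_\theta(1)=\theta(1)$.

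The heart of the matter — and the step I expect to be hardest — is proving that each $\Psi_\theta$ is actually an irreducible \emph{character}, so that $\ker\Psi_\theta$ is genuinely a normal subgroup. I expect this to follow from showing that the linear map $\alpha\mapsto\tilde\alpha^G-\alpha(1)(\pi-1_G)$ from class functions on $H/H^*$ to class functions on $G$ is an \emph{isometry}. Expanding $\langle\Psi_\alpha,\Psi_\beta\rangle_G$ and applying Frobenius reciprocity, the restriction identity just established, and Mackey's formula, one finds that every cross term becomes an explicit expression involving only the rank $\langle\pi,\pi\rangle_G$ of the action, and that these contributions cancel, leaving $\langle\Psi_\alpha,\Psi_\beta\rangle_G=\langle\alpha,\beta\rangle_{H/H^*}$. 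In particular $\langle\Psi_\theta,\Psi_\theta\rangle_G=1$, so $\Psi_\theta=\pm\chi$ for some $\chi\in\operatorname{Irr}(G)$; since $\Psi_\theta(1)=\theta(1)>0$ the sign is $+$ and $\Psi_\theta\in\operatorname{Irr}(G)$. Apart from this inner-product computation, the only nontrivial ingredient is the Mackey bookkeeping behind the restriction identity; everything else is formal.

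Finally I would check that $K^*=\bigcap_{\theta\in\operatorname{Irr}(H/H^*)}\ker\Psi_\theta$, which is then a normal subgroup of $G$. For the inclusion $\subseteq$: an element $g\in K^*$ is either not conjugate into $H$ — then $\tilde\theta^G(g)=0$ and $\pi(g)=0$, so $\Psi_\theta(g)=\theta(1)$ — or conjugate to some $h\in H^*$, in which case the restriction identity gives $\tilde\theta^G(h)=\theta(1)\pi(h)$ and hence $\Psi_\theta(g)=\Psi_\theta(h)=\theta(1)=\Psi_\theta(1)$. For $\supseteq$: if $g\notin K^*$ then $g$ is conjugate to some $h\in H\setminus H^*$, so $\Psi_\theta(g)=\tilde\theta(h)=\theta(hH^*)$, and since $hH^*\neq 1$ in $H/H^*$ and the irreducible characters of $H/H^*$ have trivial common kernel, some $\theta$ has $\theta(hH^*)\neq\theta(1)$, i.e.\ $g\notin\ker\Psi_\theta$. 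This exhibits $K^*\normal G$; moreover, since $\Psi_\theta$ restricts to $\tilde\theta$ on $H$, we get $K^*\cap H=\bigcap_\theta\ker\tilde\theta=H^*$ (again because $\operatorname{Irr}(H/H^*)$ has trivial common kernel), and then the order count from the first paragraph gives $G=HK^*$.
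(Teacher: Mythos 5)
Your proof is correct, but note that the paper itself gives no proof of Theorem~\ref{wie}: it defers to Huppert (Satz V.7.5), Knapp--Schmid and Kaplan, remarking that all of these use character theory, and your argument is essentially the classical one found there. The key steps you flag do check out: the Mackey computation gives $(\tilde\theta^G)|_H=\tilde\theta+\theta(1)(\pi|_H-1_H)$ because $H\cap H^y\leq H^*\cap (H^*)^y$ forces the non-principal double-coset terms to be multiples of principal characters, and in the inner product $\langle\Psi_\alpha,\Psi_\beta\rangle_G$ every term involving the rank $r=\langle\pi,\pi\rangle_G$ appears with coefficient $\alpha(1)\beta(1)(r-1)$ and the four contributions cancel to leave $\delta_{\alpha\beta}$, so each $\Psi_\theta$ is a genuine irreducible character and the kernel-intersection and counting arguments complete the proof exactly as you describe.
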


Different proofs of Theoreom~\ref{wie} can be found in Huppert's book~\cite[Satz V.7.5]{huppert}, in \cite{knappschmid} and in \cite{kaplan}. In the third of these references, a group $G$ satisfying the hypotheses of Theorem~\ref{wie}
is called a \emph{Frobenius--Wielandt group} (FW group for short) with \emph{FW complement}
$H/H^*$ and \emph {FW kernel} $K^*$. All these proofs 
involve, to some extent, character theory.

Note that, in our notation, the hypothesis of Theorem~\ref{wie} can be written as
\[U = U_G(H) \leq H^* \normal H,\]
and therefore the possible FW complements for $G$ are all the nontrivial factor groups of $H/U$. (Thus, $H/U$ is the largest FW complement.)	
In particular, if $H^* = U$ then $K^* = K$, showing the following result.

\begin{corollary}\label{corwie} 
With the notation as in Definition~\ref{5subsets},
$K$ is a normal subgroup of $G$, and $K \cap H = U$. 
\end{corollary}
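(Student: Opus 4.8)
The plan is to deduce this directly from Wielandt's Theorem~\ref{wie} by taking $H^{*}=U$. First I would dispose of the degenerate case $U=H$: here $H\setminus U=\emptyset$, so $K=G\setminus\bigcup_{g\in G}(H\setminus U)^{g}=G$, which is a normal subgroup of $G$ and satisfies $K\cap H=H=U$, so there is nothing to prove. (This case has to be separated off, since Theorem~\ref{wie} requires $H^{*}$ to be a \emph{proper} normal subgroup of $H$.)

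From now on assume $U<H$. By the remark recorded in Definition~\ref{5subsets}, $U=U_{G}(H)$ is a normal subgroup of $H$, and since $U<H$ this gives $U\normal H$. Moreover, for every $g\in G\setminus H$ the subgroup $H\cap H^{g}$ is one of the generators of $U$, so $H\cap H^{g}\leq U$. Hence the hypotheses of Theorem~\ref{wie} are met with $H^{*}=U$.

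Next I would simply read off the conclusion. Theorem~\ref{wie} with $H^{*}=U$ produces a normal subgroup
\[
K^{*}=G\setminus\bigcup_{g\in G}(H\setminus U)^{g}
\]
of $G$ with $K^{*}\cap H=U$ and $G=HK^{*}$. But by Definition~\ref{5subsets} the right-hand side is exactly $K=K_{G}(H)$, so $K$ is normal in $G$ and $K\cap H=U$, as required.

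I do not expect any real obstacle: essentially all the work is done by Theorem~\ref{wie}. The only points needing care are recognising that the set $K$ from Definition~\ref{5subsets} is literally the Frobenius--Wielandt kernel for the choice $H^{*}=U$, checking the (immediate) inclusion $H\cap H^{g}\leq U$, and isolating the trivial case $U=H$, where Wielandt's properness hypothesis fails but the conclusion is obvious.
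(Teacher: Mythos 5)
Your proof is correct and follows exactly the paper's route: the paper likewise obtains the corollary by observing that $H\cap H^{g}\leq U$ for all $g\notin H$, so Theorem~\ref{wie} applies with $H^{*}=U$ and yields $K^{*}=K$. Your explicit treatment of the degenerate case $U=H$ (where the properness hypothesis of Wielandt's theorem fails but the conclusion is trivial) is a small point of care that the paper passes over silently.
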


Now the inclusions in Lemma~\ref{elem}\ref{item:three},\ref{item:four}, 
together with the result of Corollary~\ref{corwie}, give the following result,
given in \cite{knappschmid} and in \cite{kaplan}.

\begin{corollary}
With the notation of Definition~\ref{5subsets},
\begin{enumerate}[(a)]
\item \label{item:uno} $D = K$; 
\item \label{item:due} $U = W\cap H = D\cap H$.
\end{enumerate}
\end{corollary}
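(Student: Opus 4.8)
The plan is to assemble this from the results already in hand, since all the substantive work has been done in Lemma~\ref{elem} and Corollary~\ref{corwie}. For part~\ref{item:uno}, I would first invoke Corollary~\ref{corwie} to know that $K$ is a (normal) \emph{subgroup} of $G$. Then from Lemma~\ref{elem}\ref{item:three} we have $\Delta \subset K$; since $K$ is closed under the group operations and $D = \langle\Delta\rangle$ is by definition the smallest subgroup containing $\Delta$, this forces $D \leq K$. The reverse inclusion $K \subseteq D$ is exactly the other half of Lemma~\ref{elem}\ref{item:three}, so $D = K$.

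For part~\ref{item:due}, I would start from the chain $U \leq W\cap H \leq D\cap H$ provided by Lemma~\ref{elem}\ref{item:four}. Using part~\ref{item:uno} just established, $D\cap H = K\cap H$, and Corollary~\ref{corwie} tells us $K\cap H = U$. Thus the chain collapses: $U \leq W\cap H \leq D\cap H = U$, whence all three subgroups coincide.

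There is essentially no obstacle here: the statement is a bookkeeping consequence of the preceding lemmas. The only thing to be careful about is the logical ordering --- part~\ref{item:due} genuinely relies on part~\ref{item:uno} (to replace $D\cap H$ by $K\cap H$), so these cannot be proved independently. It is worth noting for the reader that the real content, in particular the fact that $K$ is a subgroup at all, rests on Wielandt's Theorem~\ref{wie} (via Corollary~\ref{corwie}), and hence ultimately on character theory; the present corollary merely packages that content into the clean identities $D = K$ and $U = W\cap H = D\cap H$.
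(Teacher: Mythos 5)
Your proposal is correct and follows exactly the route the paper intends: it combines the inclusions of Lemma~\ref{elem}\ref{item:three},\ref{item:four} with Corollary~\ref{corwie} (using that $K$ is a subgroup to get $D=\langle\Delta\rangle\leq K$, and the reverse inclusion $K\subseteq D$ to conclude $D=K$, then collapsing the chain for part~(b)). The paper states this in one sentence without spelling out the details; your write-up simply makes those details explicit.
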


Note that, while Frobenius groups are permutation groups, the permutation action of a FW group $G$ on the cosets of its subgroup $H$ is not necessarily faithful. The corollaries above provide, in this case, the information analogous to that offered by the second form of Frobenius' theorem (Theorem~\ref{frob}(b)).

The results above show that FW groups and groups in which $D_G(H) < G$ (or, equivalently, $U_G(H) < H$) are essentially the same class. The only difference between the two concepts rests on the fact that the definition of a FW complement is closed under taking factors: if $H/H^*$ is a FW complement for $G$, and $H^* \leq N \normal H$ then $H/N$ is a FW complement for $G$. Then $D_G(H) \cap H = U_G(H)$ turns out to be the smallest normal subgroup $H^*$ of $H$ such that $H/H^*$ is a FW complement; equivalently, $K$ turns out to be the smallest FW kernel for $G$. 

It is interesting to note that a proof of the equality $U = H\cap W = H\cap D$, not using character theory, would give a character-free proof of Wielandt's Theoreom~\ref{wie} for the FW complement $H/U$, and in particular of Frobenius'
theorem.
In fact, by Lemma~\ref{elem}\ref{item:three}, we have $K \subseteq D$. But it is easy to compute the order of the set $K$, and if  $U = H\cap W = H\cap D$, then  we have $|K| = |D|$, whence $K = D$, which is a subgroup.

In \cite{loupass} the quotient $H/U$ is studied, under the name of \emph{generalised Frobenius complement}.

In \cite{espuelas} the following characterisation is given, generalising the characterisation of the Frobenius complements in terms of fixed-point-free linear actions mentioned in the remark after Theorem~\ref{fpf}.

\begin{theorem}\label{esp}

Let $G$ be an FW group, with FW complement $H/H^*$. Then $H/H^*$ is isomorphic to a quotient $L/M$ where the group $L$ is an irreducible subgroup of $\mathrm{GL}(n, q)$ for some $n$, where 
$q$ is a prime not dividing $|L|$, and furthermore $M$ is nilpotent;
and every element in $L \setminus M$ acts fixed-point-freely on $Q = (C_q)^n$.
Moreover, $L$ and $M$ are isomorphic to sections of $H$ and $H^*$ respectively. If, in addition, $H/H^*$ 
is a $p$-group, then $L$ is also a $p$-group.

Conversely, if $L$ is a group acting by automorphisms on a group $Q$, and $M$ is a normal subgroup of $L$ such that every element in $L \setminus M$ acts fixed-point-freely on $Q$, then the semidirect product $LQ$ is an FW group with FW complement $L/M$.

\end{theorem}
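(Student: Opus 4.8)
The plan is to prove the two implications separately, since they are of quite different natures: the converse is a short direct verification, while the forward direction carries all the weight.

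\emph{The converse.} Given $L$ acting by automorphisms on an elementary abelian $q$-group $Q$, together with a proper normal subgroup $M$ of $L$ such that every element of $L\setminus M$ acts fixed-point-freely on $Q$ (the case $M=L$ being vacuous), I would check that the semidirect product $G=LQ$, with its subgroup $H=L$, satisfies the hypothesis of Wielandt's Theorem~\ref{wie} with $H^*=M$. Take $g\in G\setminus L$ and write $g=qy$ with $q\in Q$, $y\in L$; then $q\neq 1$, and a short computation in $Q\rtimes L$ gives $L^g=L^{q^y}$ with $q^y\neq 1$. If $\ell\in L\cap L^{q^y}$ then $(q^y)\ell(q^y)^{-1}\in L$, and reading off the $Q$-component forces $\ell$ to centralise the nontrivial element $q^y$ of $Q$; hence $\ell$ has a nonzero fixed vector, so $\ell\in M$. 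Thus $U_G(L)\leq M\normal L$, and Theorem~\ref{wie} says precisely that $G=LQ$ is a Frobenius--Wielandt group with FW complement $L/M$ (one checks, although it is not needed, that its FW kernel is $MQ$).

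\emph{The forward direction: reductions.} Let $G$ be an FW group with FW complement $H/H^*$ and FW kernel $K^*$, so that $K^*\normal G$, $K^*\cap H=H^*$ and $G=HK^*$. By the discussion following Theorem~\ref{wie} we may assume $U_G(H)\leq H^*$, and since $\mathrm{core}_G(H)\leq H\cap H^g\leq H^*$ for every $g\notin H$ we may pass to $G/\mathrm{core}_G(H)$ and assume the action of $G$ on the right cosets of $H$ is faithful, all without changing the complement $H/H^*$. The task is then to manufacture the data $(q,Q,L,M)$, imitating the classical argument (recalled after Theorem~\ref{fpf}) that a Frobenius complement acts fixed-point-freely on an irreducible module.

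\emph{The forward direction: the module.} The structural input I would exploit is that elements outside $H^*$ have very small centralisers in the kernel: if $h\in H\setminus H^*$ and $x\in C_{K^*}(h)$ then $h\in H\cap H^x$, so either $x\in H\cap K^*=H^*$ or $x\notin H$ and then $h\in H\cap H^x\leq H^*$ by Wielandt's hypothesis; either way $x\in H^*$, so $C_{K^*}(h)\leq H^*$. Using this I would build $Q$ from a $G$-chief factor $A/B$ inside $K^*$ of $q$-power order, chosen so that $H^*\cap A\leq B$ (which, by the centraliser bound above, makes every $h\in H\setminus H^*$ act fixed-point-freely on $A/B$), then refine to an irreducible $\mathbb{F}_qH$-submodule $Q\leq A/B$ and set $L$, $M$ to be the images of $H$, $H^*$ in $\mathrm{GL}(Q)$. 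Then $L$ is irreducible on $Q$, every element of $L\setminus M$ acts fixed-point-freely, and since $h\notin H^*$ can have no nonzero fixed vector we get $C_H(Q)\leq H^*$ and hence $L/M\cong H/H^*$; moreover $L$ and $M$ are sections (in fact quotients) of $H$ and $H^*$.

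\emph{What remains, and the main obstacle.} Two points are left. The $p$-group addendum I expect to follow by carrying out the construction inside a Sylow $p$-subgroup of $H$ (passing to a section rather than a quotient) once a suitable module is in hand. The genuinely hard points are: (i) arranging $q\nmid|L|$ --- the tension being that $q$ must divide $|K^*|$ for a nontrivial $q$-section of $K^*$ to exist at all, yet must avoid $|L|$ --- which in general requires first replacing $G$ by a more tractable FW group with the same complement and choosing the chief factor with care; and (ii) proving that $M$ is \emph{nilpotent}. Obstacle (ii) is the FW analogue of Thompson's theorem on Frobenius kernels, and I would attack it by a Thompson-style transfer/character argument combined with the information that Corollary~\ref{corwie} and Theorem~\ref{wie} give about how $H^*$ is embedded in $K^*$; this is where I expect the bulk of the effort to lie, and it is also the step that genuinely uses that $(H,H^*)$ arises from an FW configuration rather than being an arbitrary pair.
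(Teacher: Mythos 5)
The paper itself does not prove Theorem~\ref{esp}: it is quoted verbatim from Espuelas \cite{espuelas}, so there is no internal proof to compare against. Judged on its own terms, your converse direction is correct and is the standard verification via Wielandt's Theorem~\ref{wie} (note only that the statement allows an arbitrary group $Q$, not just an elementary abelian one; your computation in fact works in that generality, so you should not restrict $Q$). Your reductions and the centraliser bound $C_{K^*}(h)\leq H^*$ for $h\in H\setminus H^*$ are also correct.

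The forward direction, however, is a plan rather than a proof, and the parts you defer are precisely the content of the theorem. Two concrete problems. First, even the step you claim to carry out has a gap: from $C_{K^*}(h)\leq H^*$ and $H^*\cap A\leq B$ you infer that $h$ acts fixed-point-freely on the chief factor $A/B$, but $C_A(h)\leq B$ only controls fixed points of $h$ on $A$ itself; to conclude $C_{A/B}(h)=1$ you need $C_{A/B}(h)=C_A(h)B/B$, which is a coprime-action statement requiring $q\nmid |h|$ --- exactly the coprimality $q\nmid |L|$ that you have postponed to item (i). The argument is therefore circular as written. You also do not justify the existence of a $G$-chief factor $A/B$ inside $K^*$ with $H^*\cap A\leq B$ (recall that $H^*$ need not be normal in $G$). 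Second, the two points you flag as ``remaining'' --- choosing $q$ with $q\nmid|L|$ and proving that $M$ is nilpotent --- are the substance of Espuelas's theorem; they occupy most of \cite{espuelas} and rest on nontrivial character-theoretic and local analysis (the same circle of ideas the present paper invokes via \cite[Lemma 1.1]{espuelas} in the proof of Theorem~\ref{result}). Gesturing at ``a Thompson-style transfer/character argument'' does not supply this. So the proposal establishes only the easy half of the statement; for the forward direction one should simply cite \cite{espuelas}, or else reproduce its argument in full.
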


\bigskip

Permutation group theorists, on the other side, looking at $G$ acting faithfully by right multiplication on the right cosets of $H$, have been concerned with \emph{derangements},
the elements of the set $\Delta$ above. Note that the subgroup $U$ defined in Definition~\ref{notation} could also be defined as the subgroup of $H$ generated by the elements fixing more than one point in the permutation action, and similarly $W$ is
generated by all elements of $G$ with more than one fixed point.

Jordan~\cite{jordan} showed in 1872
that a transitive finite permutation group $G$ of degree $n>1$
must contain a derangement, and Cameron and Cohen~\cite{cc} showed in 1992
that the number of derangements is at least $|G|/n$. They also characterised
groups in which equality holds: they are just the doubly transitive Frobenius
groups, classified by Zassenhaus~\cite{zassenhaus}, see also~\cite{passman}.
We recommend a paper by Serre~\cite{serre} for some beautiful applications of
these results in number theory and topology.

Bailey \emph{et~al.}~\cite{bcgr}
showed that the index $|G:D|$ is at most $n-1$; and this can be improved to
$\sqrt{n}-1$ if $G$ is primitive but not affine.

Since, as mentioned after Theorem~\ref{frob}, the structure of the Sylow $p$-subgroups of Frobenius complements is restricted, it is reasonable to ask whether there is any restriction on the structure of the Sylow $p$-subgroups of a FW complement. The answer to this question is negative: in~\cite{espuelas} some examples of $p$-groups that are FW complements, but are not even factors of Frobenius complements were presented, while in~\cite{scoppola} it was shown that every $p$-group is isomorphic to the FW complement of a suitable FW group.

\section{Examples of $p$-groups as FW complements}
\label{s:examples}
 
We have established that all groups appearing as $G/D$ are FW complements and, conversely, every FW complement appears as a factor group of a suitable $G/D$.
Note that, in Proposition 3.1 of \cite{bcgr}, a characterisation of $G/D$ is given that is essentially the same as the characterisation of  generalised Frobenius complements that is contained in \cite{loupass}.

Furthermore, note that Theorem~\ref{esp} offers a construction of all FW complements as FW complements of affine type (so that $G$ is an affine group and $H$
the linear group stabilising the zero vector).

For any positive integer $r$, let $F$ be a finitely generated free group, and
$\kappa_{p^r+1}(F)$ its $(p^r+1)$-st modular dimension subgroup.
In \cite{scoppola}
it is shown that the factor group $F/\kappa_{p^r+1}(F)$ can be realised as an
FW complement. The complexity of these $p$-groups grows with the parameter $r$, and every finite $p$-group appears as a factor group of one of them. We do not repeat that construction here, but we focus on 
one of the key results of that paper which is used here. 

\begin{lemma}\label{charH}
Let $H$ be a finite $p$-group and $H^*$ be a proper normal subgroup of $H$. Then the following statements are equivalent.

\begin{itemize}

\item[(a)] There exists a complex $H$-module $M$ such that $H^*$ contains every element of $H$ that fixes a nontrivial vector in $M$.

\item[(b)] There is a cyclic section $C/E$ of $H$ such that every element of $H \setminus H^*$ has a power in $C\setminus E$.

\end{itemize}

\end{lemma}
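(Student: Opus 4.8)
The plan is to pass between the two conditions using the ordinary representation theory of $p$-groups, the bridge being the classical fact that a finite $p$-group is an $M$-group: every irreducible complex $H$-module has the form $\mathrm{Ind}_C^H\lambda$ for some subgroup $C\leq H$ and some linear character $\lambda$ of $C$. I would organise everything around one elementary computation for such induced modules. Namely, let $C\leq H$, let $\lambda$ be a linear character of $C$, and put $E=\ker\lambda$, so $E\normal C$ and $C/E$ is cyclic. For $h\in H$, decompose the right cosets $C\backslash H$ into orbits under right multiplication by $\langle h\rangle$: the orbit through $Cg$ has length $m$, the least positive integer with $gh^mg^{-1}\in C$, and $h$ acts on the corresponding $\langle h\rangle$-invariant block of $\mathrm{Ind}_C^H\lambda$ with eigenvalues the $m$-th roots of $\lambda(gh^mg^{-1})$. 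Hence $h$ has a nonzero fixed vector in $\mathrm{Ind}_C^H\lambda$ if and only if $gh^mg^{-1}\in E$ for some $g\in H$ (with $m$ the associated orbit length); and since $gh^mg^{-1}$ always lies in $C$, this says exactly that some conjugate $h'$ of $h$ satisfies $(h')^{m'}\in E$, where $m'$ is the least positive power of $h'$ lying in $C$. Establishing this orbit--eigenvalue description is the only genuine computation in the proof; it is routine, the one hazard being bookkeeping with coset conventions.

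For (b) $\Rightarrow$ (a): given a cyclic section $C/E$ as in (b), pick a faithful linear character $\lambda$ of the cyclic group $C/E$, regard it as a linear character of $C$ with kernel $E$, and set $M=\mathrm{Ind}_C^H\lambda$. Were some $h\in H\setminus H^*$ to fix a nonzero vector of $M$, the description above would produce a conjugate $h'$ of $h$ --- still outside $H^*$, since $H^*\normal H$ --- with $(h')^{m'}\in E$, where $m'$ is the least positive power of $h'$ in $C$. But (b), applied to $h'$, yields $k\geq 1$ with $(h')^k\in C\setminus E$; the positive powers of $h'$ lying in $C$ are exactly the multiples of $m'$, so $m'\mid k$ and hence $(h')^k=\bigl((h')^{m'}\bigr)^{k/m'}\in E$, because $E$ is a subgroup --- a contradiction. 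Thus no element outside $H^*$ fixes a nonzero vector of $M$, which is (a).

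For (a) $\Rightarrow$ (b): let $M$ be a nonzero module as in (a) (the condition being vacuous when $M=0$, which is why one reads (a) with $M$ nonzero). Decompose $M$ into irreducible constituents, each monomial since $H$ is a $p$-group. The trivial module $\mathbf{1}_H$ cannot occur among them: otherwise every element of $H$ would fix a nonzero vector, forcing $H\leq H^*$, against $H^*\normal H$. Choose any constituent $V=\mathrm{Ind}_C^H\lambda$; then $\lambda\neq\mathbf{1}_C$ (otherwise $V$ would contain $\mathbf{1}_H$, hence equal it, which was excluded), so $E:=\ker\lambda$ is a proper normal subgroup of $C$ and $C/E$ is a nontrivial cyclic section of $H$. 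Now fix $h\in H\setminus H^*$. As fixed spaces are additive over direct summands, $h$ fixes no nonzero vector of $V$, hence none on the block spanned by the $\langle h\rangle$-orbit of the coset $C$, whose length $m$ is the least positive integer with $h^m\in C$; but if $h^m$ lay in $E=\ker\lambda$, then $1$ would be an eigenvalue of $h$ on that block. Therefore $h^m\in C\setminus E$, so $h$ has a power in $C\setminus E$; as $h\in H\setminus H^*$ was arbitrary, $C/E$ is the section required in (b).

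I do not expect a serious obstacle beyond the induced-module computation and routine group-theoretic bookkeeping. The one point worth emphasising is that in (a) $\Rightarrow$ (b) a single monomial constituent already does the job --- precisely because the fixed space of $M$ is the join of the fixed spaces of its constituents, so each constituent is simultaneously ``good'' for every element outside $H^*$ --- and that in (b) $\Rightarrow$ (a) one must allow the coset rep $g$ to vary, which is harmless because $H\setminus H^*$ is closed under conjugation.
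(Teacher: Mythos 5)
Your proof is correct and follows essentially the same route as the paper's: both rest on the monomiality of $p$-groups, induce a linear character $\lambda$ of $C$ with kernel $E$, and reduce the question of whether $x$ fixes a nonzero vector of $\mathrm{Ind}_C^H\lambda$ to whether some conjugate of $x$ has its least positive power lying in $C$ actually landing in $E$. The only difference is presentational: where the paper invokes Mackey's theorem and Frobenius reciprocity to expand $(\mu_{\langle x\rangle},1_{\langle x\rangle})$ as a sum over $(C,\langle x\rangle)$-double cosets, you unpack that same double-coset decomposition into an explicit orbit/eigenvalue computation on the blocks of the induced module, and you treat the degenerate cases ($M$ trivial, $\lambda=1_C$, the reduction to a single constituent) somewhat more carefully than the paper does.
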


\begin{proof}

Assume that (a) holds. Without loss of generality assume that $M$ is irreducible. Since $p$-groups are monomial, $M = L^H$, for a linear module $L$ of some subgroup $C$ of $H$, and let $E$ be the kernel of the corresponding linear representation of $C$. Let $\mu$ and $\lambda$ be the characters afforded by $M$ and $L$, respectively. Let $x \in H \setminus H^*$. 
Then by Mackey's Theorem and Frobenius reciprocity,
\begin{eqnarray*}
0 &=& (\mu_{\langle x\rangle},1_{\langle x\rangle})\\
&=& (\lambda^H_{\langle x\rangle},1_{\langle x\rangle})\\
&=& \left(\sum_y (\lambda^y_{C^y \cap\langle x\rangle})^{\langle x\rangle}, 1_{\langle x\rangle}\right)\\
&=& \sum_y\left((\lambda^y_{C^y \cap \langle x\rangle})^{\langle x\rangle}, 1_{\langle x\rangle}\right)\\
&=& \sum_y(\lambda^y_{C^y \cap\langle x\rangle}, 1_{C^y \cap\langle x\rangle}),
\end{eqnarray*}
where $y$ runs over a set of double coset representatives of $C$ and $\langle x\rangle$. In particular, we have $(\lambda_{C \cap\langle x\rangle}, 1_{C \cap\langle x\rangle}) = 0$, and then $C \cap\langle x\rangle \nleq E$.

Conversely, suppose that (b) holds. Induce to $H$ a linear character $\lambda$ of $C$ with kernel $E$. Let $\mu = \lambda ^H$. Since $H^* \normal H$, also $H\setminus H^*$ is a union of conjugacy classes of $H$; therefore all the conjugates of an element $x \in H \setminus H^*$ have a power in $C \setminus E$.
Equivalently $x$ has a power  in $C^y \setminus E^y$ for all $y$, and reading backwards the above equalities we get our result.
\end{proof}

In view of Theorem~\ref{esp}, the above result actually characterises the factors $H/H^*$ that can be realised as FW complements in terms of a property of the group $H$, \emph{independently} of the structure of the group $G$ in which $H$ appears as the stabiliser of a point. On the other hand, it is clear that the structure of subgroups $W$, $U$ and $D$ depends on the structure of $G$.

This is illustrated by the following result.

\begin{theorem}\label{result}

Let  $G$ be a FW group having $H/H^*$ as a FW complement, and $K^*$ as a FW kernel. Assume that $H$ is a finite p-group. Then there is a subgroup $C$ of $H$, and a proper normal subgroup $E$ of $C$, such that
\begin{itemize}
\item $C/E$ is cyclic; and
\item every element of $H \setminus H^*$ has a power in $C \setminus E$.
\end{itemize}
\end{theorem}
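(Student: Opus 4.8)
The conclusion asked for is precisely condition~(b) of Lemma~\ref{charH} applied to the pair $(H,H^*)$; since $H$ is a finite $p$-group and $H^*\normal H$, that lemma applies. So the plan is to verify the equivalent condition~(a) of Lemma~\ref{charH}: to produce a complex $H$-module in which every element that fixes a nonzero vector already lies in $H^*$.

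To construct such a module I would apply Theorem~\ref{esp} to $G$, with FW complement $H/H^*$. It yields a prime $q$, an integer $n$, an irreducible subgroup $L\le\mathrm{GL}(n,q)$ with $q\nmid|L|$ (a $p$-group, since $H/H^*$ is one), a normal subgroup $M\normal L$, and an isomorphism $H/H^*\cong L/M$, such that every element of $L\setminus M$ acts fixed-point-freely on $Q=(C_q)^n$. Because $q\nmid|L|$, the algebra $\overline{\mathbb{F}}_q L$ is semisimple with the same irreducible characters as $\mathbb{C}L$, so $Q$ (extended to $\overline{\mathbb{F}}_q$) lifts to a complex $L$-module $\widehat Q$ with matching character values throughout $L$; matching values at all powers of an element force its eigenvalue multisets on $Q$ and on $\widehat Q$ to coincide, hence its fixed spaces have the same dimension. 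Consequently every element of $L$ that fixes a nonzero vector of $\widehat Q$ lies in $M$.

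It remains to descend from $L$ to $H$. The kernel of the action of $L$ on $\widehat Q$ fixes every vector, so it meets $L\setminus M$ trivially and therefore lies in $M$; inspecting the construction behind Theorem~\ref{esp} one sees that $L$ may be taken to be a quotient $H/H_0$ with $H_0\le H^*$ and $M=H^*/H_0$. Inflating $\widehat Q$ along $H\twoheadrightarrow H/H_0=L$ then produces a complex $H$-module whose set of elements fixing a nonzero vector is the full preimage of the corresponding subset of $L$, hence is contained in the preimage of $M$, namely $H^*$. Thus $(H,H^*)$ satisfies condition~(a) of Lemma~\ref{charH}, and the theorem follows from the implication (a)$\Rightarrow$(b) of that lemma.

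The only delicate step is the descent just described. Theorem~\ref{esp} as stated only guarantees that $L$ is \emph{isomorphic to a section} of $H$ (and $M$ to a section of $H^*$), but the ``has a power in $C\setminus E$'' condition is not inherited through an arbitrary section: one genuinely needs $L$ to be a quotient $H/H_0$ and $M$ the matching quotient $H^*/H_0$, so that a power of $h\in H$ maps to the same power of its image in $L$. The way around this is to take $H_0$ to be the kernel of the fixed-point-free-outside-$H^*$ action on $H$ underlying Theorem~\ref{esp}, and to note, as above, that this kernel is contained in $H^*$; equivalently, one records a version of Theorem~\ref{esp} producing a complex module for $H$ itself — not merely for a section of it — with the property in Lemma~\ref{charH}(a), after which the result is immediate.
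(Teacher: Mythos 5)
Your overall architecture coincides with the paper's: both proofs reduce Theorem~\ref{result} to condition (a) of Lemma~\ref{charH} by exhibiting a module in characteristic coprime to $|H|$ on which every element of $H\setminus H^*$ acts without nontrivial fixed vectors, and then lifting to characteristic~$0$. The difference lies in where that module comes from, and that is exactly where your argument has a gap. You invoke Theorem~\ref{esp}, which (as you yourself observe) only delivers a group $L$ isomorphic to a \emph{section} of $H$; the condition ``every element of $H\setminus H^*$ has a power in $C\setminus E$'' does not pass through an arbitrary section, so you genuinely need $L$ to be a quotient $H/H_0$ with $H_0\le H^*$. Your resolution --- that ``inspecting the construction behind Theorem~\ref{esp}'' shows $L$ may be so taken --- is an assertion, not a proof, and it is precisely the content that the paper's own argument supplies. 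As written, the proposal defers the essential step.

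The paper closes this gap without routing through Theorem~\ref{esp} at all: it quotes Espuelas's Lemma~1.1(3) to see that $H$ is a Sylow $p$-subgroup of $G$ and that $|K^*:H^*|$ is prime to $p$, picks a Sylow $q$-subgroup $Q$ of the kernel $K^*$ for some $q\ne p$, uses the Frattini argument to arrange that $H$ normalises $Q$, and then takes $\Omega_1(Z(Q))$ as an $F_qH$-module; applying Espuelas's Lemmas~1.2(1) and 1.1(1) to the FW subgroup $H\Omega_1(Z(Q))$ shows that every element of $H\setminus H^*$ acts on this module fixed-point-freely. This produces the coprime-characteristic module \emph{for $H$ itself}, so no descent from a section is needed; the lift to a complex module and the application of Lemma~\ref{charH} then proceed exactly as in your sketch. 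To salvage your version you would have to prove, rather than assert, that the kernel of the relevant action is contained in $H^*$ and that $L$ is realised as the corresponding quotient of $H$ --- which in effect amounts to redoing the paper's Sylow-subgroup construction.
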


\begin{proof} 
We will simply list the steps of the proof, indicating the proper reference in each case. 
By \cite[Lemma 1.1(3)]{espuelas}, $|K^*:H^*|$ is coprime to $p$, and $H$ is a Sylow $p$-subgroup of $G$. Now let $Q$ be a Sylow $q$-subgroup of $K^*$, with $q \neq p$. Clearly $Q$ is also a Sylow subgroup of $G$.  

By the Frattini argument, $N_G(Q)K^* = G$. Thus, up to conjugation, we may assume that $P$ normalizes $Q$. Now setting $U=H\Omega_1(Z(Q))$ in \cite[Lemma 1.2(1)]{espuelas}, we see that $U$ is a FW group with complement $H/H^*$ and FW kernel $H^*\Omega_1(Z(Q))$. Clearly $\Omega_1(Z(Q))$ can be seen as a vector space over the field $F_q$ with $q$ elements, and therefore as a $F_qH$-module. By \cite[Lemma 1.1(1)]{espuelas}, the elements of $H \setminus H^*$ act without fixed points on $\Omega_1(Z(Q))$. But it is well known (see \cite{serre2}, \cite{loupass}) that $H$ can then be seen as a complex linear group, in which the elements of $H \setminus H^*$ do not fix any nontrivial vector. Lemma~\ref{charH} now gives our result.
\end{proof}

Thus while by \cite{scoppola} the structure of $H/H^*$ is completely unrestricted, the structure of $H$ is subject to a significant restriction, in order to allow the construction of a FW complement, while the question about possible restrictions on the structure of $G/D \simeq H/U$ is still open.

In the case of permutation groups we can conclude the following.

\begin{corollary} Let $G$ be a transitive nonregular permutation group, with one-point stabilizer $H$, and assume that $H$ is a finite $p$-group, and $D_G(H) < G$. Then there is a subgroup $C$ of $H$, and a proper normal subgroup $E$ of $C$, such that $C/E$ is cyclic, and every element of $H \setminus H \cap D_G(H)$ has a power in $C \setminus E$.
\end{corollary}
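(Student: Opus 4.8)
The plan is to recognize that the hypotheses place us exactly in the situation of Theorem~\ref{result}, so that the corollary is essentially a translation of that theorem into permutation-group language. First I would observe that, since $G$ acts faithfully and transitively on a set of size $n = |G:H| > 1$ with point stabiliser $H$, this action is permutation-isomorphic to the action of $G$ by right multiplication on the set of right cosets of $H$. Under this identification the set $\Delta_G(H)$ of Definition~\ref{5subsets} is precisely the set of derangements, and $D_G(H) = \langle \Delta_G(H)\rangle$ is the derangements subgroup. By the discussion following Corollary~\ref{corwie}, the condition $D_G(H) < G$ is equivalent to $U_G(H) < H$, so that $H^* := U_G(H) = D_G(H)\cap H$ is a \emph{proper} normal subgroup of $H$ (this properness is exactly what is needed for $H/H^*$ to be an admissible FW complement).

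Next I would check that $(G,H)$ is an FW group with complement $H/H^*$ and kernel $K^* := D_G(H)$. Set $H^* = U_G(H)$ as above. By Corollary~\ref{corwie} (equivalently Theorem~\ref{wie} with $H^* = U_G(H)$), $K_G(H)$ is a normal subgroup of $G$ with $K_G(H)\cap H = H^*$, and by the equality $D = K$ (Corollary, part~\ref{item:uno}) we have $K_G(H) = D_G(H)$. Moreover $D_G(H)$ is transitive by Lemma~\ref{elem}\ref{item:one}, hence $G = HD_G(H)$. So $G$ is indeed a Frobenius--Wielandt group with FW complement $H/H^*$ and FW kernel $K^* = D_G(H)$, and $H$ is a finite $p$-group by hypothesis.

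Finally I would invoke Theorem~\ref{result} applied to this FW group: it produces a subgroup $C \leq H$ and a proper normal subgroup $E \normal C$ with $C/E$ cyclic such that every element of $H\setminus H^*$ has a power in $C\setminus E$. Since $H^* = H\cap D_G(H)$, the set $H\setminus H^*$ is exactly $H\setminus (H\cap D_G(H))$, which is the assertion of the corollary. There is no real obstacle here beyond correctly identifying $H^*$ with $H\cap D_G(H)$ and verifying that $(G,H)$ meets the definition of an FW group; the only subtlety worth a word of care is the equivalence $D_G(H) < G \iff U_G(H) < H$, which guarantees that $H^*$ is a proper normal subgroup of $H$ and hence that Theorem~\ref{result} is applicable, and this equivalence is precisely the content of Lemma~\ref{elem} combined with Corollary~\ref{corwie} recorded in the survey.
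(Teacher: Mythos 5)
Your proposal is correct and follows exactly the route the paper intends: the corollary is stated without proof as an immediate consequence of Theorem~\ref{result}, and your argument supplies precisely the bookkeeping needed (identifying $H^* = U_G(H) = H \cap D_G(H)$ as a proper normal subgroup of $H$ via Corollary~\ref{corwie} and the equality $D=K$, so that $G$ is an FW group with complement $H/H^*$ and kernel $D_G(H)$). Nothing is missing.
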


\end{document}